\providecommand{\customgenericname}{}
\newcommand{\newcustomtheorem}[2]{%
  \newenvironment{#1}[1]
  {%
   \renewcommand\customgenericname{#2}%
   \renewcommand\theinnercustomgeneric{##1}%
   \innercustomgeneric
  }
  {\endinnercustomgeneric}
}
\newtheorem{theorem}{Theorem}
\newtheorem{cor}{Corollary}
\newtheorem{lem}{Lemma}
\newtheorem{question}{Question}
\newtheorem{definition}{Definition}
\newtheorem{example}{Example}
\newtheorem{remark}{Remark}
\title[Characterizing expansivity through $C^*$-algebras]{Characterizing expansivity through $C^*$-algebras}
\author{S. Bautista}
\address{Department of Mathematics,
National University of Colombia, Bogot\'a-Colombia.}
\email{sbautistad@unal.edu.co}
\author{W. Jung}
\address{Chungnam National University, Daejeon, Republic of Korea}
\email{wcjungdynamics@gmail.com}
\author{C.A. Morales}
\address{Hangzhou International Innovation Institute of Beihang University,  
Hangzhou 311115, China.}
\email{morales@impa.br}
\keywords{$C^*$-algebra, Expansive homeomorphism, Observable, F$_\sigma$-subalgebra.}
\subjclass[2020]{37B05, 46L05}
\begin{document}

\begin{abstract}
We study expansive homeomorphisms of a compact metric space \(X\) through the lens of the commutative \(C^*\)-algebra \(C(X)\) of continuous complex-valued functions, viewed as observables of the system. We introduce the notion of expansive observables: elements of \(C(X)\) whose level sets distinguish distinct orbits. We prove that the expansive observables form an F\(_\sigma\)-subalgebra of \(C(X)\), and we characterize them completely for connected equicontinuous homeomorphisms, showing that only constant observables are expansive in this setting. Furthermore, we establish that topologically conjugate homeomorphisms share the same algebra of expansive observables. Using this framework, we show that the set of periodic points intersects at most countably many level sets of any expansive observable. This provides $C^*$-algebraic proofs of well-known facts like for instance that the set of periodic points of an expansive homeomorphism is countable or that the sole continuum exhibiting homeomorphisms which are both expansive and equicontinuous are the degenerated ones. Finally, we prove that no homeomorphism of the circle or the unit interval admits a dense set of expansive observables, yielding a $C^*$-algebraic demonstration of the nonexistence of expansive homeomorphisms in these spaces.
\end{abstract}

\maketitle

\section{Introduction}

\noindent
Much have been written about how a dynamical system represented by a homeomorphism $f:X\to X$ of a metric space $X$ and the C$^*$-algebra $C(X)$ of continuous complex-valued functions on $X$ are related.
This is usually done through the associated 
C$^*$-dynamical system
$(C(X),\alpha)$ where
$$
\alpha:\mathbb{Z}\to Aut(C(X))
$$
is the group homomorphism
$$
\alpha_i(\varphi)=\varphi\circ f^{-i},
\quad
\quad\forall (i,\varphi)\in\mathbb{Z}\times C(X)
$$
and $Aut(C(X))$ is the set of automorphisms of $C(X)$.
These devices were used in the $C^*$-algebra proof of the well-known fact that a homeomorphism of a locally compact metric space has a Borel section if and only if the sets of recurrent and periodic points coincide \cite{st}.
Approach of this kind gave rise the theory of C$^*$-dynamical systems, see the monographs \cite{t1} and \cite{t2}.

In this paper, we will explore a different view about this relationship.
Recall that a homeomorphism $f:X\to X$ is {\em expansive} if
there is a positive number
$\epsilon$ such that if $x,y\in X$ and $d(f^i(x),f^i(y))\leq \epsilon$ for all $i\in\mathbb{Z}$, then $x=y$.
The basic question is:
\begin{question}
Can we use the C$^*$-algebra $C(X)$ to identify expansive homeomorphisms of $X$?
\end{question}

A natural approach is to consider the induced $C^*$ dynamical system $(C(X),\alpha)$ as above yielding the following definition
(related to
{\em strict observability} \cite{AA18} and {\em fiberwise expansivity} \cite{r02}).
For simplicity the elements of $C(X)$ will be referred to as {\em observables}.

\begin{definition}
An observable $\varphi$ is {\em strongly expansive} if there is $\epsilon>0$ (called {\em strong expansivity constant}) such that
$$
x,y\in X\mbox{ and }
|\alpha_i(\varphi)(x)-\alpha_i(\varphi)(y)|\leq\epsilon\quad(\forall i\in\mathbb{Z})\quad\Longrightarrow\quad \varphi(x)=\varphi(y).
$$
\end{definition}

Is there any relationship between the strongly expansive observables and the expansivity of $f$? 
What if every observable is strongly expansive? 
What properties may have $SExp(f)$, the set of strongly expansive observables of a given homeomorphism $f$?
We don't know if a homeomorphism of a compact metric space is expansive if (and also only if) every observable is strongly expansive (even with a common strongly expansivity constant).

On the  other hand, recall the structure of $C^*$-algebra of $C(X)$ given by the standard sum and multiplication operations together with the norm
$$
\|\varphi\|_\infty=\sup_{x\in X}|\varphi(x)|.
$$
(Actually $\|\cdot\|_\infty$ may attain the value $\infty$.)
Denote by $Const(X)$ the set of constant observables.
Recall that $C\subset C(X)$ is a {\em subalgebra}
if
$$
\mathbb{C}\cdot C\cup(C+C)\cup(C\cdot C)\subset C.
$$
Though $\mathbb{C}\cdot SExp(f)\subset SExp(f)$, we don't have
$SExp(f)+SExp(f)\subset SExp(f)$.

However, there is another approach based on the following remark:
If a homeomorphism $f:X\to X$ is expansive, then for every observable $\varphi$ there is $\delta>0$ such that
$x,y\in X$ belong to the same level set of $\varphi$ if their orbits $(f^i(x))_{i\in\mathbb{Z}}$ and $(f^i(y))_{i\in\mathbb{Z}}$ stay $\delta$-close each other. In other words,
\begin{equation}
\label{papa}
x,y\in X\mbox{ and }d(f^i(x),f^i(y))\leq \delta\, (\forall i\in\mathbb{Z})
\quad\Longrightarrow\quad \varphi(x)=\varphi(y).
\end{equation}
The simple (but motivating) result below shows that this property characterizes the expansive homeomorphisms. More precisely, we have the following result.

\begin{theorem}
\label{ex1}
A homeomorphism of a metric space $f:X\to X$ is expansive if and only if there is $\delta>0$ such that \eqref{papa} holds $\forall \varphi\in C(X)$.
\end{theorem}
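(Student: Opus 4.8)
The plan is to prove the two implications separately, since both reduce to elementary observations once the right test observables are identified.

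For the forward implication I would assume $f$ is expansive with expansivity constant $\epsilon$ and simply set $\delta=\epsilon$. If $x,y\in X$ satisfy $d(f^i(x),f^i(y))\leq\delta$ for all $i\in\mathbb{Z}$, then expansivity forces $x=y$, whence $\varphi(x)=\varphi(y)$ for every $\varphi\in C(X)$ trivially. This direction requires no choice of observable at all.

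For the converse I would argue by contradiction against the failure of injectivity. Assume \eqref{papa} holds for some $\delta>0$ and all $\varphi\in C(X)$, and suppose $x,y\in X$ satisfy $d(f^i(x),f^i(y))\leq\delta$ for all $i$. The key step is to exhibit, whenever $x\neq y$, a single continuous observable separating $x$ from $y$: I would take $\varphi(z)=d(z,x)$, or its bounded normalization $z\mapsto d(z,x)/(1+d(z,x))$ if one insists on staying inside the bounded part of $C(X)$. This $\varphi$ is continuous and satisfies $\varphi(x)=0\neq\varphi(y)$. Applying \eqref{papa} to this $\varphi$ then yields $\varphi(x)=\varphi(y)$, a contradiction; hence $x=y$, and $f$ is expansive with expansivity constant $\delta$.

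I expect no serious obstacle here; the only point demanding a moment's care is making sure the separating function genuinely lies in $C(X)$ as the paper uses it. Since distance-to-a-point functions are Lipschitz, continuity is automatic, and the normalized version is bounded, so it is a legitimate observable regardless of whether $X$ is compact. The conceptual content is simply that a metric space carries enough continuous functions to separate points, so that the totality of level-set constraints coming from $C(X)$ is exactly as strong as the pointwise identification $x=y$.
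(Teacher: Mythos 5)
Your proposal is correct and uses the same key device as the paper, namely the distance-to-a-point observable $z\mapsto d(z,x)$ to force $x=y$ from the hypothesis \eqref{papa}. The paper merely packages the converse as a contradiction via a sequence of $\frac1n$-close non-expansive pairs, whereas you argue directly with the given $\delta$; this is a cosmetic difference, and your direct version is, if anything, slightly cleaner.
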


\begin{proof}
We only need to prove the sufficiency.
Suppose that there is $\delta>0$ satisfying \eqref{papa} for all $\varphi\in C(X)$.
If $f$ were not expansive,
there would exist sequences $x_n\neq y_n$ in $X$ such that
$d(f^i(x_n),f^i(y_n))\leq \frac{1}n$ for all $i\in\mathbb{N}$ and $n\in\mathbb{Z}$.
By taking $n\in\mathbb{N}$ with $\frac{1}n<\delta$ and
$\varphi:X\to \mathbb{C}$ defined by $\varphi(z)=d(x_n,z)$ for all $z\in X$ we get $\varphi\in C(X)$ which
does not satisfy \eqref{papa}. This is absurd so the result holds.
\end{proof}

This result motivates the following definition.

\begin{definition}
An observable $\varphi$ is {\em expansive} for $f$ if there is $\delta>0$
(called {\em expansivity constant}) such that \eqref{papa} holds.
\end{definition}

Then, we can rewrite the above theorem as follows:

\begin{customthm}{1(bis)}\label{ovni}
A homeomorphism of a metric space is expansive if and only if
every observable is expansive with a common expansivity constant.
\end{customthm}

There are two questions derived from this theorem.

\begin{question}
\label{q2}
What properties may have the set of expansive observables $Exp(f)$ of a given homeomorphism $f$?
\end{question}

\begin{question}
\label{q3}
What can happen if we drop the common expansivity constant hypothesis in Theorem \ref{ovni}?
\end{question}

The following remark holds.

\begin{remark}
Every strongly expansive observable $\varphi$ of a homeomorphism of a compact metric space $f:X\to X$ is expansive. In other words,
$$
SExp(f)\subset Exp(f).
$$
\end{remark}

\begin{proof}
Let $\epsilon$ be a strongly expansivity constant of $\varphi$.
Since $X$ is compact, $\varphi$ is uniformly continuous so there is $\delta>0$ such that
$$
|\varphi(a)-\varphi(b)|\leq\epsilon
\quad\mbox{ whenever }
a,b\in X,\, d(a,b)\leq\delta.
$$
Then, if $d(f^i(x),f^i(y))\leq\delta$ for all $i\in\mathbb{Z}$, $|\varphi(f^i(x))-\varphi(f^i(y))|\leq\epsilon$ so
$$
|\alpha_i(\varphi)(x)-\alpha_i(\varphi)(y)|\leq\epsilon,\quad\quad\forall i\in\mathbb{Z},$$
hence $\varphi(x)=\varphi(y)$
completing the proof.
\end{proof}

An observable $\varphi$ is {\em locally constant} on $X'\subset X$ if for every $x\in X'$ there is a neighborhood $U$ such that $\varphi(U\cap X')=\{\varphi(x)\}$. If $\varphi$ is locally constant on $X$ we just say that $\varphi$ is locally constant.

\begin{example}
Every expansive observable of the identity $id_X$ of $X$ is locally constant.
\end{example}

We will generalize this example later one.

\begin{example}
Every homeomorphism exhibiting an injective expansive observable is expansive.
\end{example}

We say that $f$ is {\em equicontinuous} if for every $\epsilon>0$ there is $\delta>0$ such that $d(f^i(x),f^i(y))\leq\epsilon$ for all $i\in\mathbb{Z}$ whenever $x,y\in X$ and $d(x,y)\leq \delta$.
We say that $f$ and another homeomorphism of a metric space $g:Y\to Y$ are {\em topologically conjugated}
if there is a homeomorphism $h:Y\to X$ such that $f\circ h=h\circ g$.

An algebra-valued map $E:Hom(X)\to 2^{C(X)}$ (where $Hom(X)$ is the set of homeomorphisms of $X$) is {\em invariant under topological conjugacy} if
the algebras $E(f)$ and $E(g)$ are isomorphic whenever $f,g\in Hom(X)$ are topologically conjugated.

We say that $x\in X$ is a periodic point of $f$ if $f^n(x)=x$ for some $n\in\mathbb{N}$.
Denote by $Per(f)$ the set of periodic points of $f$.
A subset of a topological space is F$_\sigma$ if it is the union of countably many closed subsets.

\begin{theorem}
\label{perra}
The following properties hold for all homeomorphisms of compact metric spaces $f:X\to X$ and $g:Y\to Y$:
\begin{enumerate}
\item
$Exp(f)$ is an F$_\sigma$ subalgebra of $C(X)$.
\item
$Const(X)\subset Exp(f)\subset C(X)$ and
either $Exp(f)=Const(X)$ or $Exp(f)=C(X)$ depending on whether $f$ is equicontinuous and $X$ is connected or else $f$ is expansive.
\item
The algebra-valued map $Exp:g\in Hom(X)\mapsto Exp(g)\subset C(X)$ is invariant under topological conjugacy.
\item
$Per(f)$ intersects at most countably many level sets of $\varphi$, $\forall \varphi\in Exp(f)$.
\item
If $X$ is infinite, then
for every $\varphi\in Exp(f)$ there are distinct $x,y\in X$ such that
\begin{equation}
\label{robador}
\lim_{n\to\infty}|\varphi(f^n(x))-\varphi(f^n(y))|=0.
\end{equation}
\end{enumerate}
\end{theorem}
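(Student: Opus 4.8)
The plan is to treat the five items in order, since each rests on the same elementary manipulation of the implication \eqref{papa}. For (1), closure under the algebra operations is immediate: if $\varphi,\psi\in Exp(f)$ have expansivity constants $\delta_\varphi,\delta_\psi$, then $\lambda\varphi$, $\varphi+\psi$ and $\varphi\psi$ all inherit \eqref{papa} with the constant $\min\{\delta_\varphi,\delta_\psi\}$, because once $d(f^ix,f^iy)\le\min\{\delta_\varphi,\delta_\psi\}$ for all $i$ forces $\varphi(x)=\varphi(y)$ and $\psi(x)=\psi(y)$, the conclusion for any combination is automatic. For the F$_\sigma$ structure I would write $Exp(f)=\bigcup_{n\ge1}Exp_{1/n}(f)$, where $Exp_\delta(f)$ denotes the observables satisfying \eqref{papa} with the fixed constant $\delta$ (a smaller constant is a weaker requirement, so these sets increase in $n$). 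Each $Exp_\delta(f)$ is closed in the sup norm, since if $\varphi_k\to\varphi$ uniformly and $d(f^ix,f^iy)\le\delta$ for all $i$, then $\varphi_k(x)=\varphi_k(y)$ for every $k$ and passing to the limit gives $\varphi(x)=\varphi(y)$. For (2), the inclusion $Const(X)\subset Exp(f)$ is trivial and $Exp(f)\subset C(X)$ is the definition; if $f$ is expansive then Theorem \ref{ovni} already puts every observable in $Exp(f)$, so $Exp(f)=C(X)$, while if $f$ is equicontinuous and $X$ is connected, any $\varphi\in Exp(f)$ with constant $\delta$ becomes locally constant (equicontinuity gives $\eta>0$ with $d(x,y)\le\eta\Rightarrow d(f^ix,f^iy)\le\delta\ (\forall i)$, so \eqref{papa} makes $\varphi$ constant on each $\eta$-ball), hence constant on the connected $X$, giving $Exp(f)=Const(X)$.

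For (3), a conjugacy $h:Y\to X$ with $f\circ h=h\circ g$ induces the algebra isomorphism $h^*:C(X)\to C(Y)$, $h^*(\varphi)=\varphi\circ h$. Iterating the conjugacy gives $f^i\circ h=h\circ g^i$, and uniform continuity of the homeomorphism $h$ of compact spaces converts $\delta'$-closeness of a pair of $g$-orbits into $\delta$-closeness of the corresponding $f$-orbits; hence $\varphi\in Exp(f)\Rightarrow\varphi\circ h\in Exp(g)$, and applying the same reasoning to $h^{-1}$ shows $h^*$ restricts to an isomorphism $Exp(f)\to Exp(g)$. For (4), write $Per(f)=\bigcup_{n\ge1}Fix(f^n)$ and observe that the number of level sets of $\varphi$ meeting $Per(f)$ equals the cardinality of $\varphi(Per(f))$, so it suffices to show each $\varphi(Fix(f^n))$ is finite. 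On $Fix(f^n)$ every orbit is $n$-periodic, so $d_n(x,y)=\max_{0\le i<n}d(f^ix,f^iy)$ coincides with $\sup_{i\in\mathbb Z}d(f^ix,f^iy)$; this $d_n$ is a metric inducing the same topology as $d$, so $Fix(f^n)$ is compact, hence totally bounded, in $d_n$. Covering it by finitely many $d_n$-balls of radius $\delta/2$ (with $\delta$ the expansivity constant of $\varphi$), any two points of one ball are within $d_n$-distance $\delta$, so \eqref{papa} makes $\varphi$ constant on each ball; thus $\varphi(Fix(f^n))$ is finite and $\varphi(Per(f))$ is a countable union of finite sets.

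Item (5) is where the real work lies. The plan is to split on whether the forward trace $\Phi:X\to K^{\mathbb N}$, $\Phi(x)=(\varphi(f^nx))_{n\ge0}$ with $K=\varphi(X)$ compact, is injective. If it is not, there are distinct $x,y$ with $\varphi(f^nx)=\varphi(f^ny)$ for all $n\ge0$, and \eqref{robador} holds trivially. If $\Phi$ is injective, then it is a homeomorphism onto its infinite image and conjugates $f$ to a shift; moreover $f$ is then expansive with constant $\delta$, since $\sup_{i\in\mathbb Z}d(f^ix,f^iy)\le\delta$ persists after replacing $(x,y)$ by $(f^kx,f^ky)$, so \eqref{papa} yields $\varphi(f^kx)=\varphi(f^ky)$ for every $k$, i.e. $\Phi(x)=\Phi(y)$, hence $x=y$.

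The main obstacle is the dynamical input needed to finish the injective case. Here $f$ is an expansive homeomorphism of an infinite compact space, so it cannot be positively expansive (a compact space admitting a positively expansive homeomorphism is finite); consequently some local stable set is nontrivial, producing distinct $x,y$ with $d(f^nx,f^ny)\to0$, and uniform continuity of $\varphi$ then gives \eqref{robador}. I expect this last step to be the crux: one must either invoke the classical finiteness theorem for positively expansive homeomorphisms or reprove the existence of a forward-asymptotic pair directly, by extracting from a non-isolated point a sequence whose forward orbits remain $\delta$-close and then using a compactness argument, together with the expansivity constant $\delta$, to upgrade persistent closeness to genuine asymptoticity.
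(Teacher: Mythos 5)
Your proposal is correct, and items (1)--(3) run exactly as in the paper (Lemma \ref{lobo}, Lemma \ref{egon} with Corollary \ref{llevarlo}, and Lemma \ref{wrong}). The differences are in (4) and (5). For (4) the paper first shows $Exp(f)=Exp(f^k)$ (Lemma \ref{mata}) and then argues by contradiction: infinitely many values of $\varphi$ on $Fix(f^k)$ would give a sequence of fixed points of $f^k$ with distinct $\varphi$-values, two of which are $\delta$-close by compactness (Lemma \ref{culito}); your total-boundedness argument in the Bowen metric $d_n$ proves the same finiteness directly, without passing through $Exp(f^k)$, and is if anything cleaner. For (5) the routes genuinely diverge. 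The paper proves Lemma \ref{inteligencia}, that for any expansive observable $\varphi$ the dynamical ball $W^s_\epsilon(x)$ is contained in $W^s(x,\varphi)$ (a subsequence-and-limit compactness argument using only expansivity of the \emph{observable}), and then quotes Proposition 2.4.1 of \cite{bs} to produce distinct $x,y$ with $y\in W^s_\epsilon(x)$. You instead split on injectivity of the forward trace $\Phi$, observe in the injective case that $f$ itself must be expansive, and then invoke the classical theorem that infinite compact spaces carry no positively expansive homeomorphism together with the standard ``local stable set lies in the stable set'' lemma for expansive maps. This works, but note two things: the finiteness theorem for positively expansive homeomorphisms you cite is precisely the content of Proposition 2.4.1 of \cite{bs}, so you are using the same external input; and the ``crux'' you flag at the end --- upgrading persistent $\delta$-closeness of forward orbits to the asymptotics \eqref{robador} --- is exactly the compactness argument of Lemma \ref{inteligencia}, which applies to any $\varphi\in Exp(f)$ with no case split at all. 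So the paper's route is the same argument with the injective/non-injective dichotomy stripped out; your version buys the extra (correct, but unneeded) observation that an injective forward $\varphi$-trace forces $f$ to be expansive.
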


This theorem provides a $C^*$-algebraic proof of the following well-known facts:

\begin{cor}
\label{c}
Every expansive homeomorphism of a compact metric space has countably many periodic points.
\end{cor}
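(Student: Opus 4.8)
The plan is to derive the corollary directly from parts (2) and (4) of Theorem~\ref{perra}, turning the topological statement ``$Per(f)$ is countable'' into a statement about the image of a single well-chosen observable. First I would record the consequence of part (2): since $f$ is expansive, $Exp(f)=C(X)$, so \emph{every} observable is expansive and part (4) applies to all $\varphi\in C(X)$. Reformulated, part (4) says precisely that $\varphi(Per(f))$ is a countable subset of $\mathbb{C}$ for every $\varphi\in C(X)$, because the values attained by $\varphi$ on $Per(f)$ index exactly those level sets of $\varphi$ that meet $Per(f)$. Thus it suffices to produce, under the assumption that $Per(f)$ is uncountable, a single $\varphi\in C(X)$ whose image $\varphi(Per(f))$ is uncountable.

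To build such an observable I would argue by contradiction and first localize the uncountability. Writing $Per(f)=\bigcup_{n\ge 1}Fix(f^n)$ with $Fix(f^n)=\{x\in X: f^n(x)=x\}$, and using that a countable union of countable sets is countable, if $Per(f)$ is uncountable then some $Fix(f^N)$ must be uncountable. Each $Fix(f^N)$ is closed in $X$ (it is the set where the continuous map $f^N$ agrees with the identity) and hence compact. An uncountable compact metric space contains a subset $\mathcal{C}$ homeomorphic to the Cantor set, by the perfect set theorem (Cantor--Bendixson). Being compact, $\mathcal{C}$ is closed in $X$, and since it is homeomorphic to the standard Cantor set there is a continuous injection $g:\mathcal{C}\to[0,1]$. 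The Tietze extension theorem then provides $\varphi\in C(X)$ with $\varphi|_{\mathcal{C}}=g$. As a continuous injection on a compact set, $g$ is an embedding, so $\varphi(\mathcal{C})=g(\mathcal{C})$ is uncountable; consequently $\varphi(Per(f))\supseteq\varphi(\mathcal{C})$ is uncountable, contradicting part (4) of Theorem~\ref{perra}. This contradiction forces $Per(f)$ to be countable.

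The routine ingredients are the identity $Exp(f)=C(X)$ supplied by part (2) and the bookkeeping that the level sets of $\varphi$ meeting $Per(f)$ are indexed by $\varphi(Per(f))$. The conceptual heart of the argument, and the step I expect to require the most care, is the passage from ``$Per(f)$ uncountable'' to ``some observable has uncountable image on $Per(f)$'': this is where the $F_\sigma$ structure of $Per(f)$ is essential, since it lets me replace $Per(f)$ by a closed uncountable piece $Fix(f^N)$, extract a Cantor set via the perfect set theorem, and realize a separating function on it as a genuine element of $C(X)$ through Tietze extension. It is precisely this realizability of functions inside $C(X)$ that makes the obstruction visible to Theorem~\ref{perra}(4).
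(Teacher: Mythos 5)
Your proof is correct and follows essentially the same route as the paper: the paper first reduces to an uncountable closed set $Fix(f^k)$ and then invokes Lemma~\ref{passagem}, whose proof is exactly your Cantor--Bendixson plus Tietze-extension construction of an observable that is injective on a Cantor subset. You have simply inlined that lemma into the argument, so the content is identical.
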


A {\em continuum} is a nonempty compact metric space. A continuum is degenerated if it reduces to a singleton.

\begin{cor}
\label{c1}
The sole continuum exhibiting homeomorphisms which are expansive and equicontinuous simultaneously are the degenerated ones.
\end{cor}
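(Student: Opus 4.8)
The plan is to obtain the corollary as an immediate consequence of the dichotomy in Theorem~\ref{perra}(2). Suppose, toward a contradiction, that $X$ is a non-degenerated continuum admitting a homeomorphism $f:X\to X$ that is simultaneously expansive and equicontinuous. Since a continuum is connected, the pair consisting of the equicontinuous map $f$ and the connected space $X$ places us in the first alternative of Theorem~\ref{perra}(2), so that $Exp(f)=Const(X)$. At the same time, the expansivity of $f$ places us in the second alternative, so that $Exp(f)=C(X)$.

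Combining the two identities forces $C(X)=Const(X)$; that is, every observable on $X$ is constant. I would then rule this out whenever $X$ has more than one point: fixing distinct $x_0,y_0\in X$, the observable $z\mapsto d(z,x_0)$ lies in $C(X)$ and takes the distinct values $0$ and $d(y_0,x_0)$, hence is not constant, a contradiction. (Equivalently one may invoke Urysohn's lemma, since a compact metric space is normal.) Therefore $X$ must reduce to a singleton, i.e.\ it is degenerated.

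The point to stress is that all the genuine work has already been absorbed into Theorem~\ref{perra}(2); the corollary itself is a two-line deduction, so there is no real obstacle at this stage. The only place demanding a moment of care is the role of connectedness: it is precisely connectedness that activates the ``$Exp(f)=Const(X)$'' branch of the dichotomy, and without it the statement fails (for instance, a two-point discrete space carries a homeomorphism that is trivially both expansive and equicontinuous). The separation-of-points step closing the argument is routine for metric spaces.
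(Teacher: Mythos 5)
Your proposal is correct and follows exactly the paper's own argument: apply both branches of the dichotomy in Theorem~\ref{perra}(2) to conclude $C(X)=Exp(f)=Const(X)$, which forces $X$ to be a singleton. You merely spell out the (routine) last step that $C(X)=Const(X)$ fails on any metric space with two points, and you correctly flag that connectedness of the continuum is what activates the $Exp(f)=Const(X)$ branch.
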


About Item (5), it is motivated by Corollary 2.4.2 in \cite{bs}.
On the other hand, Item (2) implies that
the extreme cases
$Exp(f)=Const(X)$ and $Exp(f)=C(X)$ occur depending on whether $f$ is equicontinuous
and $X$ is connected or else $f$ is expansive.
A good question is when the reversed implications hold namely
if $Exp(f)=Const(X)$ or $Exp(f)=C(X)$ imply that $f$ is equicontinuous or expansive respectively.

Next, we deal with Question \ref{q3} namely
the question of which homeomorphism $f:X\to X$ satisfy $Exp(f)=C(X)$.
We may expect that such homeomorphisms and the expansive ones share some properties.
For instance, a homeomorphism $f:X\to X$ 
is {\em pointwise expansive} \cite{r} if for every $x\in X$ there is $\delta_x>0$ such that
$x=y$ whenever $x,y\in X$ satisfy $d(f^i(x),f^i(y))\leq \delta_x$ for all $i\in\mathbb{Z}$.
The difference between this definition and that of expansive actions is that the $\delta=\delta_x$ in the former depends on $x$ whereas it does not in the latter.
In particular, every expansive action is pointwise expansive but not conversely (see \cite{r} for a counterexample).

\begin{example}
Every homeomorphism of a metric space $f:X\to X$ satisfying $Exp(T)=C(X)$ is pointwise expansive.
\end{example}

\begin{proof}
Fix $x\in X$. Define $\varphi:X\to \mathbb{C}$ by
$\varphi(y)=d(x,y)$ for $y\in X$. Then, $\varphi\in C(X)=Exp(T)$ and so
$\varphi$ has an expansivity constant $\delta_x$.
If now $x,y\in X$ satisfy $d(f^i(x),f^i(y))\leq \delta$ for all $i\in\mathbb{Z}$,
then $\varphi(x)=\varphi(y)$ namely $0=d(x,y)$ thus $x=y$ hence $f$ is pointwise expansive.
\end{proof}

As a result,
there is no homeomorphism $f$ of the circle $S^1$ or the interval $[0,1]$
satisfying $Exp(f)=C(S^1)$ or $Exp(f)=C([0,1])$ respectively (see \cite{r}).
These facts motivate to study the homeomorphisms described below. 

\begin{definition}
A homeomorphism of a metric space $f:X\to X$ is
 {\em pseudoexpansive} (resp. {\em strongly pseudo expansive}) if $Exp(f)$ (resp. $SExp(f)$) is dense in $C(X)$.
 \end{definition}
 
The Stone-Weierstrass Theorem implies that $f$ is pseudoexpansive if and only if $Exp(f)$ separates points of $X$  (i.e. if
for all distinct $x,y\in X$ one has $\varphi(x)\neq\varphi(y)$ for some
expansive observable $\varphi$).
We have the following implications for homeomorphisms $f:X\to X$.\\

\centerline{
\xymatrix{ 
& & *+[F-,]{\txt{SExp(f)=X}} \ar@{=>}[d]_{(2)} \ar@{=>}[rr]^{(3)}
& & *+[F]{\txt{Strongly pseudoexpansive}} \\
*+[F]{\txt{Expansive}}  \ar@{=>}[rr]^{(1)}
& & *+[F-,]{\txt{$Exp(f)=C(X)$}}  \ar@{=>}[rr]^{(4)}
& & *+[F]{\txt{Pseudoexpansive}}\ar@{<=}[u]_{(5)} \\ 
& & *+[F-,]{\txt{Pointwise expansive}} \ar@{<=}[u]_{(6)}}\\
}

\medskip

\medskip

The following question was made by E. Rego:

\begin{question}
Is any of the converse of the implications (1) to (6) above true?
Is every pseudoexpansive homeomorphism pointwise expansive? 
\end{question}

With these definitions and motivated by the above results we will prove the following one.

\begin{theorem}
\label{furia}
There is no pseudoexpansive homeomorphisms of $S^1$ or $[0,1]$.
\end{theorem}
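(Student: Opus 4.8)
The plan is to use the Stone--Weierstrass reformulation quoted just above the statement: it suffices to prove that for every homeomorphism $f$ of $X=S^1$ or $X=[0,1]$ the algebra $Exp(f)$ fails to separate points, i.e.\ that there exist distinct $x,y\in X$ with $\varphi(x)=\varphi(y)$ for all $\varphi\in Exp(f)$. I will produce such a pair in one of two ways: by invoking Item (2) of Theorem \ref{perra} whenever $f$ is equicontinuous, and otherwise by exhibiting a \emph{nondegenerate subcontinuum whose iterates are dynamically contracted}.

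The technical heart is the following key lemma. \emph{If $J\subset X$ is a nondegenerate connected set with}
\[
\lim_{|n|\to\infty}\operatorname{diam}\bigl(f^n(J)\bigr)=0,
\]
\emph{then every $\varphi\in Exp(f)$ is constant on $J$.} To prove it I fix $\varphi\in Exp(f)$ with expansivity constant $\delta$ and a point $z\in J$. By the hypothesis I pick $M$ with $\operatorname{diam}(f^nJ)\le\delta$ for all $|n|>M$, and then, using continuity of the finitely many maps $f^n$ with $|n|\le M$, a connected relative neighborhood $J'$ of $z$ in $J$ with $\operatorname{diam}(f^nJ')\le\delta$ also for $|n|\le M$. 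Then $\sup_{n\in\mathbb{Z}}d(f^nx,f^ny)\le\delta$ for all $x,y\in J'$, so $\varphi(x)=\varphi(y)$ by \eqref{papa}. Hence $\varphi$ is locally constant on $J$ and, $J$ being connected, constant on $J$; any two distinct points of $J$ are the sought inseparable pair.

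Next I would run a case analysis based on the classical structure theory of homeomorphisms of $[0,1]$ and $S^1$, reducing each case to one of the two mechanisms. If some power $f^q$ equals the identity, then $f$ has finite order, hence is equicontinuous, and Item (2) gives $Exp(f)=Const(X)$, which does not separate points; the same holds when $f$ is an orientation-preserving circle homeomorphism conjugate to an irrational rotation, equicontinuity being a topological-conjugacy invariant on compact spaces. In every remaining case I locate a suitable $J$. For an increasing $f\neq\mathrm{id}$ on $[0,1]$, and after passing to the orientation-preserving power $g=f^2$ for a decreasing $f$ with $f^2\neq\mathrm{id}$, to $g=f^q$ for a circle map of rational rotation number $p/q$ with $f^q\neq\mathrm{id}$, and to $g=f^2$ for an orientation-reversing circle map with $f^2\neq\mathrm{id}$, the set $\operatorname{Fix}(g)$ is a proper nonempty closed set, so a complementary arc $(a,b)$ is carried into itself by $g$ without interior fixed points; choosing $c\in(a,b)$ and $K=[c,g(c)]$ gives a nondegenerate subcontinuum whose $g$-iterates telescope $(a,b)$, whence $\operatorname{diam}(g^nK)\to0$. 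Since each $f^m$ differs from a $g$-iterate by one of the finitely many uniformly continuous maps $f^0,\dots,f^{q-1}$, also $\operatorname{diam}(f^mK)\to0$, and the key lemma applies. Finally, when $f$ is orientation-preserving with irrational rotation number but is \emph{not} minimal, Denjoy theory furnishes a wandering interval $J$; its iterates are pairwise disjoint arcs of finite total length, so $\operatorname{diam}(f^nJ)\to0$ and the key lemma applies once more.

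The main obstacle I anticipate is not any single estimate but the bookkeeping of this case analysis, and especially the coexistence of the two genuinely different mechanisms it must accommodate: the key lemma can \emph{never} apply to a map conjugate to an irrational rotation, since there the diameters of iterates of any arc stay bounded below, so the equicontinuity route through Item (2) is indispensable precisely for the minimal irrational case, while the contracted-continuum route handles every non-equicontinuous situation. Care is also needed to transport the diameter contraction from the convenient power $g=f^q$ back to $f$ itself, which is exactly where uniform continuity of the finitely many intermediate iterates enters.
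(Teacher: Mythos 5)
Your proof is correct, and it rests on the same two mechanisms as the paper's: the equicontinuous case is killed by Item (2) of Theorem \ref{perra}, and every other case is killed by finding a nondegenerate connected set on which all expansive observables are forced to be constant, so that $Exp(f)$ lands in a proper closed subspace of $C(X)$. The organization differs, though. The paper does not run a rotation-number case analysis: it isolates your ``key lemma'' in advance as Lemma \ref{ulrich} (locally connected $X$ plus property (P) --- pairwise disjoint iterates of a connected open set have diameters tending to $0$, automatic on $S^1$ since disjoint arcs have summable lengths --- imply every expansive observable is locally constant off $\Omega(f)$), and then splits only on whether $\Omega(f)=S^1$ or not; in the first case $f$ is conjugate to a rotation and Items (2)--(3) give $Exp(f)=Const(S^1)$, in the second a complementary interval of $\Omega(f)$ does the job. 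Your version makes explicit the classification facts (Poincar\'e, Denjoy, the telescoping of $[c,g(c)]$ between consecutive fixed points of a power $g=f^q$) that the paper's dichotomy quietly absorbs, and your transport of the diameter decay from $g=f^q$ back to $f$ could be replaced by a direct appeal to Lemma \ref{mata} ($Exp(f)=Exp(f^q)$). One genuine point in your favor: the paper's written proof treats only $S^1$ and never addresses $[0,1]$, whereas your increasing/decreasing analysis on the interval supplies that missing half; conversely, you correctly flag that the contracted-continuum mechanism cannot reach the minimal irrational case, which is exactly why both proofs must invoke Item (2) there.
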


Since every expansive homeomorphism is pseudoexpansive,
we obtain a $C^*$-algebraic proof of the following well-known result:

\begin{cor}
There is no expansive homeomorphisms of $S^1$ or $[0,1]$.
\end{cor}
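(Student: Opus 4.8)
The plan is to prove the statement in its separated form. Since the excerpt records, as a consequence of Stone--Weierstrass, that $f$ is pseudoexpansive if and only if $Exp(f)$ separates points, it suffices to exhibit, for every homeomorphism $f$ of $X=[0,1]$ or $X=S^1$, a single pair of distinct points $x\neq y$ with $\varphi(x)=\varphi(y)$ for all $\varphi\in Exp(f)$. The device I would use is the relation $x\,R_\delta\,y$ meaning $d(f^i(x),f^i(y))\le\delta$ for all $i\in\mathbb Z$, together with the equivalence relation $\overline{R}_\delta$ it generates. If $\varphi$ is expansive with constant $\delta_\varphi$, then by \eqref{papa} it identifies every $R_{\delta_\varphi}$-pair, hence is constant on each $\overline{R}_{\delta_\varphi}$-class. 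Thus it is enough to find a fixed pair $x\neq y$ lying in a single $\overline{R}_\delta$-class for every $\delta>0$: for such a pair and any $\varphi\in Exp(f)$ one applies this with $\delta=\delta_\varphi$ to a chain $x=z_0,\dots,z_m=y$ (whose intermediate points may depend on $\varphi$) and concludes $\varphi(x)=\varphi(y)$, so that $x,y$ are never separated.

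The heart of the argument is a chaining lemma for one-dimensional monotone dynamics. Suppose $g$ is an increasing homeomorphism of a compact interval $[a,b]$ fixing both endpoints with no fixed point inside, so $g$ lies entirely on one side of the diagonal; then $g$ preserves order and every orbit converges to one endpoint forwards and to the other backwards. For $a<s<t<b$ one has $g^i(s)<g^i(t)$ for all $i$, and since both orbits are squeezed toward a common endpoint as $i\to+\infty$ and toward a common endpoint as $i\to-\infty$, the forward and backward tails of $\sup_{i\in\mathbb Z}\big(g^i(t)-g^i(s)\big)$ are bounded by the (small) distances of $g^i(s)$ to its limits, while the finite central range is controlled by uniform continuity of the iterates. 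I would show this forces $\sup_i\big(g^i(t)-g^i(s)\big)\to 0$ as $t\downarrow s$, uniformly enough to partition any $[x,y]\subset(a,b)$ into finitely many subintervals of orbit-diameter $\le\delta$; hence any two points of $(a,b)$ are $\overline{R}_\delta$-equivalent for every $\delta$. An elementary upgrade then passes from $g=f^q$ to $f$: uniform continuity of $f,\dots,f^{q-1}$ converts control of the residue-class iterates into control of the full $f$-orbit.

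With the lemma in hand I would run the case analysis, reducing orientation-reversing maps (of either space) to their orientation-preserving squares. If $f$ is equicontinuous, then since $[0,1]$ and $S^1$ are connected, Theorem \ref{perra}(2) already gives $Exp(f)=Const(X)$, which does not separate points. Otherwise: on $[0,1]$ an increasing $f\neq\mathrm{id}$ has $Fix(f)$ a proper closed set, so a complementary interval supplies the pair. On $S^1$ with rational rotation number $p/q$, either $f^q=\mathrm{id}$ (then $f$ is conjugate to a rotation, hence equicontinuous) or $Fix(f^q)$ is a proper closed set and a complementary arc supplies the pair via $f^q$. On $S^1$ with irrational rotation number, either $f$ is minimal and conjugate to the rotation (hence equicontinuous) or $f$ is a Denjoy example, in which case a wandering interval $I$ has $\sum_n|f^n(I)|<\infty$, so its iterate-lengths tend to $0$ in both directions and the two endpoints of $I$ form the required $\overline{R}_\delta$-pair for every $\delta$. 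In every branch $Exp(f)$ fails to separate points, so it is not dense, and $f$ is not pseudoexpansive.

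The step I expect to be the main obstacle is the uniform tail control inside the chaining lemma: one must bound $\sup_{|i|\ge N}d\big(f^i(z_j),f^i(z_{j+1})\big)$ by $\delta$ simultaneously for all links of the chain, not merely pointwise, so that the number of chain links stays finite. The natural estimate uses order-preservation to sandwich each orbit between two reference orbits converging to the endpoints. This is the only genuinely quantitative ingredient; the reductions from $f$ to $f^q$, the Denjoy estimate, and the disposal of the equicontinuous cases via Theorem \ref{perra}(2) are comparatively soft.
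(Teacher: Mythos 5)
Your argument is correct, but it reproves the stronger statement (Theorem \ref{furia}, nonexistence of \emph{pseudo}expansive homeomorphisms) by a genuinely different route from the paper; the corollary then follows exactly as in the paper, since an expansive $f$ has $Exp(f)=C(X)$ and is in particular pseudoexpansive. The paper's mechanism is Lemma \ref{ulrich}: every circle homeomorphism satisfies property (P), so expansive observables are locally constant on $S^1\setminus\Omega(f)$, hence constant on each complementary interval when $\Omega(f)\neq S^1$, while the case $\Omega(f)=S^1$ is disposed of via conjugacy to a rotation and Theorem \ref{perra}(2)--(3). You instead exhibit a fixed pair of points identified by every expansive observable, using the transitive closure of the relation $x\,R_\delta\,y\iff d(f^i(x),f^i(y))\le\delta\ (\forall i)$ and a chaining lemma on complementary intervals of the fixed-point set; the key estimate is sound, since order preservation sandwiches every orbit of the chain between the orbits of the two endpoints, so a single $N$ controls all tails and uniform continuity of $f^{-N},\dots,f^N$ controls the central block. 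What your approach buys: it covers $[0,1]$ explicitly (the paper's proof of Theorem \ref{furia} as written only treats $S^1$), and it localizes the failure of density to a concrete unseparated pair rather than to local constancy on a wandering interval. What it costs: you invoke Poincar\'e rotation-number theory and the Denjoy dichotomy, comparable in weight to the paper's use of ``$\Omega(f)=S^1$ implies conjugate to a rotation,'' and your chaining lemma plays the role that Lemma \ref{ulrich} plays in the paper. Both routes ultimately rest on Theorem \ref{perra}(2) for the equicontinuous branches. The one point you should make explicit when writing this up is that in the Denjoy case the two endpoints of the wandering interval are joined by a $\delta$-chain obtained by subdividing the interval (the tails being controlled by $\sum_n|f^n(I)|<\infty$ and the central block by uniform continuity), not that they are directly $R_\delta$-related; your text gestures at this but the subdivision step is needed there just as in the fixed-endpoint lemma.
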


This paper is organized as follows. In Section \ref{sec2} we provide some preliminary results. In Section \ref{sec3}, we prove the theorems.

The authors would like to thank professor Rego for his useful observations.

\section{Preliminary lemmas}
\label{sec2}

\noindent
Hereafter $X$ is a compact metric space.
To prove Corollary \ref{c} we use the following lemma which seems to be well-known and whose proof is included for completeness.

\begin{lem}
\label{passagem}
If $P\subset X$ closed and
$\{t\in\mathbb{R} : P\cap \varphi^{-1}(t)\neq\emptyset\}$
is countable for every $\varphi\in C(X)$, then $P$ is countable.
\end{lem}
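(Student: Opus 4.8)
The plan is to argue by contraposition: assuming $P$ is uncountable, I will construct a single real-valued observable $\varphi\in C(X)$ whose level sets meet $P$ for uncountably many values, contradicting the hypothesis. Observe first that for a real-valued $\varphi$ the index set $\{t\in\mathbb{R} : P\cap\varphi^{-1}(t)\neq\emptyset\}$ is exactly the image $\varphi(P)$, since $t$ lies in it iff some $x\in P$ has $\varphi(x)=t$. Thus the hypothesis says precisely that $\varphi(P)$ is countable for every $\varphi\in C(X)$, and the goal reduces to producing a single $\varphi$ with $\varphi(P)$ uncountable.

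Since $P$ is closed in the compact metric space $X$, it is itself a compact metric space, in particular Polish and second countable. The key structural input is that an uncountable compact metric space contains a homeomorphic copy of the Cantor set, and I would obtain this in two steps. First, let $P^*$ be the set of points of $P$ every neighborhood of which is uncountable (the condensation points). Second countability forces $P\setminus P^*$ to be countable, so $P^*$ is nonempty, and it is a perfect closed subset of $P$; this is just the Cantor--Bendixson decomposition. Second, inside the perfect compact set $P^*$ one builds a Cantor scheme: a binary tree of nested nonempty closed balls of shrinking diameter with disjoint children, which is possible precisely because $P^*$ has no isolated points. The resulting intersection yields a Cantor set $K\subseteq P^*\subseteq P$.

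With $K$ in hand, I use the standard continuous surjection $g:K\to[0,1]$ obtained by identifying $K$ with $\{0,1\}^{\mathbb{N}}$ and sending a binary sequence to the real number it encodes. Since $K$ is compact, hence closed in $X$, and $g$ takes values in $[0,1]$, the Tietze extension theorem provides a continuous extension $\varphi:X\to[0,1]\subseteq\mathbb{R}\subseteq\mathbb{C}$, so $\varphi\in C(X)$. Then $\varphi(P)\supseteq\varphi(K)=g(K)=[0,1]$, whence the index set $\{t\in\mathbb{R} : P\cap\varphi^{-1}(t)\neq\emptyset\}$ contains $[0,1]$ and is uncountable, contradicting the hypothesis. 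Therefore $P$ must be countable.

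The routine parts, namely the identification of the index set with $\varphi(P)$ and the application of Tietze, are immediate. The only substantive point, and the one I would state most carefully, is the extraction of a Cantor set from an uncountable compact metric space. Depending on how self-contained the paper wants to be, I would either cite the Cantor--Bendixson theorem together with the classical fact that a nonempty perfect compact metric space contains a Cantor set, or spell out the condensation-point argument and the Cantor-scheme construction sketched above; this is where essentially all the content of the lemma lies.
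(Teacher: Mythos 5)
Your proposal is correct and follows essentially the same route as the paper: Cantor--Bendixson to extract a perfect subset, embedding a Cantor set $K$ into $P$, and Tietze extension of a map defined on $K$ to produce the contradicting observable. The only cosmetic difference is that you push $K$ onto $[0,1]$ by the standard binary-coding surjection, whereas the paper uses the injective inclusion $K\hookrightarrow\mathbb{R}$ of a copy of the standard Cantor set; both yield an uncountable image of $P$ and the arguments are otherwise identical.
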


\begin{proof}
Suppose, by contradiction, that $P$ is uncountable. 
By the Cantor--Bendixson theorem \cite{k}, $P$ contains a nonempty perfect subset $C\subset P$. 
Every nonempty perfect compact metric space contains a subset homeomorphic to the standard Cantor set $K\subset [0,1]$; hence there exists a continuous embedding 
\[
g:K\hookrightarrow C\subset X.
\]
Let $Y=g(K)$, which is closed in $X$ and homeomorphic to $K$. 
Define a continuous map 
\[
\psi:Y\longrightarrow\mathbb{R}, \qquad 
\psi = i\circ g^{-1},
\]
where $i:K\hookrightarrow\mathbb{R}$ is the inclusion map. 
Then $\psi$ is continuous and injective on $Y$. 
Since $Y$ is closed in the normal space $X$, by the Tietze extension theorem there exists a continuous extension 
\[
\varphi:X\longrightarrow\mathbb{R}
\]
such that $\varphi|_Y = \psi$. 
Because $\varphi$ is injective on the uncountable set $Y\subset P$, the set of level values
\[
\{t\in\mathbb{R} : P\cap \varphi^{-1}(t)\neq\emptyset\}
\]
is uncountable. 
This contradicts the hypothesis. 
Hence $P$ is countable.
\end{proof}

Now, let $f:X\to X$ a
homeomorphism and $\varphi$ be an observable of $X$.
We say that $x\in X$ is a fixed point if $f(x)=x$.
Let $Fix(f)$ be the set of fixed points of $f$.

\begin{lem}
\label{sonriso}
If $\varphi$ is expansive for $f$, then there is $\delta>0$ such that
if $x,y\in Fix(T)$ and $d(x,y)<\delta$, then $\varphi(x)=\varphi(y)$.
\end{lem}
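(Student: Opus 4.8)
The plan is to take $\delta$ to be precisely the expansivity constant of $\varphi$ and to exploit the fact that fixed points have constant orbits. Recall that an expansive observable $\varphi$ comes equipped with a constant $\delta>0$ such that whenever $x,y\in X$ satisfy $d(f^i(x),f^i(y))\leq\delta$ for all $i\in\mathbb{Z}$, one necessarily has $\varphi(x)=\varphi(y)$, which is exactly condition \eqref{papa}. The whole point is that for fixed points the hypothesis of this implication degenerates into a single inequality involving $d(x,y)$.

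First I would observe that if $x\in Fix(f)$ then $f^i(x)=x$ for every $i\in\mathbb{Z}$; this holds for negative $i$ as well, since $f$ is a homeomorphism and $f(x)=x$ forces $f^{-1}(x)=x$. Consequently, for any two fixed points $x,y\in Fix(f)$ the orbit-distance sequence is constant:
$$
d(f^i(x),f^i(y))=d(x,y),\qquad\forall i\in\mathbb{Z}.
$$
Next I would feed this identity into the expansivity condition. Choosing $\delta$ to be the expansivity constant of $\varphi$, suppose $x,y\in Fix(f)$ satisfy $d(x,y)<\delta$. Then $d(f^i(x),f^i(y))=d(x,y)<\delta$ for every $i\in\mathbb{Z}$, so the premise of \eqref{papa} is satisfied and we conclude $\varphi(x)=\varphi(y)$, as required.

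There is essentially no obstacle here: the content lies entirely in the remark that a fixed point has a constant orbit, which collapses the expansivity hypothesis to a condition on $d(x,y)$ alone. The only mild care needed is to note that the $\delta$ produced is the expansivity constant itself, so no new estimate, uniform continuity argument, or use of compactness is required; the statement is a direct specialization of the definition of expansive observable to the invariant set $Fix(f)$.
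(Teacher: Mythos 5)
Your proof is correct and is essentially identical to the paper's: both take $\delta$ to be the expansivity constant and use the fact that fixed points have constant orbits, so $d(f^i(x),f^i(y))=d(x,y)<\delta$ for all $i\in\mathbb{Z}$ and \eqref{papa} applies directly. No further comment is needed.
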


\begin{proof}
If $\delta$ is an expansivity constant of $\varphi$ and $d(x,y)\leq\delta$ for $x,y\in Fix(f)$, then
$d(f^n(x),f^n(y))<\delta$ for all $n\in\mathbb{Z}$ hence $\varphi(x)=\varphi(y)$.
\end{proof}

\begin{lem}
\label{culito}
If $\varphi$ is expansive for $f$, then
the number of level sets of $\varphi$ intersecting $Fix(f)$ is finite.
\end{lem}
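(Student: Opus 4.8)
The plan is to reduce the claim to the statement that the image $\varphi(Fix(f))$ is a finite subset of $\mathbb{C}$, since the number of level sets $\varphi^{-1}(t)$ meeting $Fix(f)$ is exactly the cardinality of $\varphi(Fix(f))$. The starting point is Lemma \ref{sonriso}, which furnishes a $\delta>0$ such that any two fixed points at distance less than $\delta$ are assigned the same value by $\varphi$. I would also record at the outset that $Fix(f)=\{x\in X:f(x)=x\}$ is closed, because $f$ is continuous, and hence compact since $X$ is compact.

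The main step is to upgrade the uniform $\delta$-condition of Lemma \ref{sonriso} into local constancy of $\varphi$ on $Fix(f)$. Fixing $x\in Fix(f)$, I would consider the relatively open neighborhood $U_x=B(x,\delta)\cap Fix(f)$. Every $y\in U_x$ is a fixed point with $d(x,y)<\delta$, so Lemma \ref{sonriso} gives $\varphi(y)=\varphi(x)$; thus $\varphi(U_x)=\{\varphi(x)\}$. This shows $\varphi|_{Fix(f)}$ is locally constant, so for each value $t$ the trace $\varphi^{-1}(t)\cap Fix(f)$ is open in $Fix(f)$. Being the complement in $Fix(f)$ of the union of the remaining (open) traces, it is also closed, hence clopen.

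To finish, I would note that the nonempty sets among $\{\varphi^{-1}(t)\cap Fix(f)\}_{t}$ form a partition of $Fix(f)$ into pairwise disjoint open sets, and therefore constitute an open cover of the compact space $Fix(f)$. Passing to a finite subcover and using disjointness forces all but finitely many of these traces to be empty, so $\varphi$ attains only finitely many values on $Fix(f)$, which is the assertion (the case $Fix(f)=\emptyset$ being trivial).

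I do not expect a genuine obstacle here: the only substantive move is deducing topological local constancy from the uniform separation constant of Lemma \ref{sonriso}, and this is immediate precisely because the constant $\delta$ does not depend on the chosen fixed point. The remaining observation, that a locally constant function on a compact space has finite image, is then standard, so the whole argument should be short.
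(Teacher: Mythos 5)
Your proof is correct and rests on the same two ingredients as the paper's: the uniform constant $\delta$ from Lemma \ref{sonriso} and the compactness of $Fix(f)$ (closed in the compact space $X$). The paper argues by contradiction---infinitely many level sets would yield infinitely many fixed points that are pairwise $\delta$-separated in value, and compactness forces two of them within distance $\delta$, contradicting Lemma \ref{sonriso}---whereas you package the same fact positively via local constancy of $\varphi|_{Fix(f)}$ and a finite subcover of the resulting clopen partition; the two arguments are interchangeable.
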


\begin{proof}
Otherwise there are infinitely many level sets of $\varphi$ intersecting $Fix(f)$.
This results in a sequence of fixed points $x_1,x_2,\cdots$ of $f$ each one belonging to different level sets of $\varphi$ (i.e. $\varphi(x_i)\neq \varphi(x_j)$ for $i\neq j$).
Since $X$ compact, there are $j\neq i$ such
that $d(x_i,x_j)<\delta$ where $\delta$ is given by Lemma \ref{sonriso}.
Then, $d(f^n(x_i),f^n(x_j))=d(x_i,x_j)<\delta$ for every $n\in\mathbb{Z}$ so $\varphi(x_i)=\varphi(x_j)$ a contradiction.
\end{proof}

\begin{lem}
\label{mata}
$Exp(f^k)=Exp(f)$ for every $k\in\mathbb{Z}\setminus\{0\}$.
\end{lem}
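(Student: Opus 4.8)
The plan is to prove the two inclusions $Exp(f^k)\subseteq Exp(f)$ and $Exp(f)\subseteq Exp(f^k)$ separately, after first reducing to the case $k>0$. Since the $f^k$-orbit of a point, $\{(f^k)^i(x):i\in\mathbb{Z}\}$, coincides with its $f^{-k}$-orbit (both range over $\{f^{kj}(x):j\in\mathbb{Z}\}$ as $i$ runs over all integers), the defining condition \eqref{papa} for $f^k$ and for $f^{-k}$ are literally the same; hence $Exp(f^k)=Exp(f^{-k})$ and I may assume $k>0$.

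For the inclusion $Exp(f^k)\subseteq Exp(f)$, let $\varphi\in Exp(f^k)$ with expansivity constant $\delta$. If $x,y\in X$ satisfy $d(f^j(x),f^j(y))\leq\delta$ for all $j\in\mathbb{Z}$, then in particular $d((f^k)^i(x),(f^k)^i(y))=d(f^{ki}(x),f^{ki}(y))\leq\delta$ for all $i\in\mathbb{Z}$, so the defining implication for $f^k$ yields $\varphi(x)=\varphi(y)$. Thus $\varphi\in Exp(f)$ with the same constant $\delta$. This direction is immediate precisely because the $f$-orbit hypothesis (over all integers) is stronger than the $f^k$-orbit hypothesis (over multiples of $k$).

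The reverse inclusion $Exp(f)\subseteq Exp(f^k)$ is where the real work lies, since the $f^k$-orbit samples the full $f$-orbit only at multiples of $k$ and one must control the intermediate iterates. Let $\varphi\in Exp(f)$ with expansivity constant $\delta$. Because $X$ is compact, each of the finitely many powers $f^0,f^1,\dots,f^{k-1}$ is uniformly continuous, so I can select a single $\delta'>0$ with the property that $d(a,b)\leq\delta'$ implies $d(f^r(a),f^r(b))\leq\delta$ for every $r\in\{0,1,\dots,k-1\}$. Now suppose $d((f^k)^i(x),(f^k)^i(y))\leq\delta'$ for all $i\in\mathbb{Z}$. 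Given any $j\in\mathbb{Z}$, I write $j=ki+r$ with $0\leq r<k$ by Euclidean division; then $f^j=f^r\circ (f^k)^i$, and applying the uniform-continuity estimate to $a=(f^k)^i(x)$ and $b=(f^k)^i(y)$ gives $d(f^j(x),f^j(y))\leq\delta$. As $j$ was arbitrary, $d(f^j(x),f^j(y))\leq\delta$ holds for all $j\in\mathbb{Z}$, whence $\varphi(x)=\varphi(y)$ since $\varphi\in Exp(f)$. Therefore $\varphi\in Exp(f^k)$ with expansivity constant $\delta'$.

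The only delicate point is producing the uniform constant $\delta'$ in the last paragraph: this is exactly where compactness of $X$ (hence uniform continuity of each $f^r$) together with finiteness of the index set $\{0,\dots,k-1\}$ are used, so that the gaps between successive $f^k$-iterates can be bridged by one common modulus of continuity. Everything else is a direct unwinding of the definition in \eqref{papa}.
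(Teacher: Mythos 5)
Your proof is correct and follows essentially the same route as the paper's: reduce to $k>0$ by symmetry of the index set $\mathbb{Z}$, get $Exp(f^k)\subseteq Exp(f)$ for free since the $f$-orbit hypothesis is stronger, and bridge the gaps in the other direction via a common modulus of uniform continuity for $f^0,\dots,f^{k-1}$. Your write-up is in fact slightly more explicit than the paper's (spelling out the Euclidean division $j=ki+r$), but the argument is the same.
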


\begin{proof}
Clearly $Exp(f)=Exp(f^{-1})$ by definition. So, we only need to prove the lemma
for $k\in\mathbb{N}$.
First we show $Exp(f)\subset Exp(f^k)$.
Take $\varphi\in Exp(f)$ and let $e$ be an expansivity constant of $\varphi$ w.r.t. $f$.  Let $\delta>0$ be such that
if $x,y\in X$ and $d(x,y)< \delta$, then $d(f^i(x),f^i(y))\leq e$ for all $0\leq i\leq k-1$.
Then, every $x,y\in X$ satisfying
$d(f^{ki}(x),f^{ki}(y))<\delta$ for all $i\in\mathbb{Z}$
satisfies $d(f^i(x),f^i(y))\leq e$ for all $i\in\mathbb{Z}$ hence
$\varphi(x)=\varphi(y)$ proving $\varphi\in E(f^k)$.
Henceforth $Exp(f)\subset Exp(f^k)$.

Now take $\varphi\in Exp(f^k)$
i.e. $\varphi$ is expansive w.r.t. $f^k$.
Let $e$ be the corresponding expansivity constant.
If $d(f^i(x),f^i(y))\leq e$ for all $i\in\mathbb{Z}$, then $d(f^{ki}(x),f^{ki}(y))\leq e$ for all $i\in\mathbb{Z}$ too so
$\varphi(x)=\varphi(y)$ thus $\varphi\in Exp(f)$ hence $Exp(f^k)\subset Exp(f)$ completing the proof.
\end{proof}

The next lemma characterizes the expansive observables for equicontinuous homeomorphisms.

\begin{lem}
\label{egon}
If $f$ is equicontinuous, then every $\varphi\in Exp(f)$ is locally constant.
\end{lem}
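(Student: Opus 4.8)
The plan is to chain together the two hypotheses at a common scale. Let $\varphi\in Exp(f)$ and let $\delta>0$ be an expansivity constant for $\varphi$, so that any pair of points whose full orbits stay within distance $\delta$ of each other must lie in the same level set of $\varphi$. The idea is to feed this $\delta$ into the definition of equicontinuity as the target accuracy $\epsilon$, obtaining a uniform gauge $\eta>0$ that controls orbits at scale $\delta$.

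Concretely, I would first invoke equicontinuity with $\epsilon=\delta$ to produce $\eta>0$ such that $d(x,y)\le\eta$ implies $d(f^i(x),f^i(y))\le\delta$ for all $i\in\mathbb{Z}$. Next, I would fix an arbitrary $x\in X$ and take the neighborhood $U=B(x,\eta)$. For any $y\in U$ we have $d(x,y)<\eta\le\eta$, so the equicontinuity estimate gives $d(f^i(x),f^i(y))\le\delta$ for every $i\in\mathbb{Z}$. Applying the defining implication \eqref{papa} for the expansive observable $\varphi$ with constant $\delta$, this forces $\varphi(x)=\varphi(y)$. Since $y\in U$ was arbitrary, $\varphi(U)=\{\varphi(x)\}$, which is exactly the statement that $\varphi$ is locally constant at $x$; as $x$ was arbitrary, $\varphi$ is locally constant on all of $X$.

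There is no serious obstacle here: the argument is a one-line composition of the two $\varepsilon$--$\delta$ definitions, and the only point requiring mild care is the bookkeeping of the inequalities (matching the strict ``$<\eta$'' of the ball against the ``$\le\eta$'' hypothesis of equicontinuity, and the ``$\le\delta$'' against the expansivity implication), which is harmless because both definitions are stated with non-strict inequalities. I would therefore expect the verification to be routine once the gauge $\eta$ is correctly extracted from equicontinuity at scale $\delta$.

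\begin{proof}
Let $\delta>0$ be an expansivity constant of $\varphi$. Since $f$ is equicontinuous, applying its definition with $\epsilon=\delta$ yields $\eta>0$ such that $d(x,y)\le\eta$ implies $d(f^i(x),f^i(y))\le\delta$ for all $i\in\mathbb{Z}$. Fix $x\in X$ and set $U=B(x,\eta)$. For every $y\in U$ we have $d(x,y)\le\eta$, hence $d(f^i(x),f^i(y))\le\delta$ for all $i\in\mathbb{Z}$, and so $\varphi(x)=\varphi(y)$ by \eqref{papa}. Therefore $\varphi(U)=\{\varphi(x)\}$, and since $x\in X$ was arbitrary, $\varphi$ is locally constant.
\end{proof}
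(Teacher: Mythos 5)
Your proof is correct and follows essentially the same argument as the paper: feed the expansivity constant of $\varphi$ into the definition of equicontinuity to obtain a gauge, and observe that the ball of that radius around any point lies in a single level set. The only difference is notational (your $\delta,\eta$ versus the paper's $\epsilon,\delta$).
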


\begin{proof}
Take $\varphi\in Exp(f)$ and let $\epsilon$ be the corresponding expansivity constant.
For this $\epsilon$ let $\delta$ be given by the equicontinuity of $f$.
Take $x\in X$ and $U=B(x,\delta)$.
If $y\in U$, $d(f^n(x),f^n(y))\leq \epsilon$ for all $n\in\mathbb{Z}$ thus
$\varphi(x)=\varphi(y)$ proving $\varphi(U)=\{\varphi(x)\}$ completing the proof.
\end{proof}

From this lemma we obtain the following corollary.

\begin{cor}
\label{llevarlo}
If $f$ is equicontinuous and $X$ is connected, then $Exp(f)=Const(X)$.
\end{cor}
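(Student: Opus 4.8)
The plan is to combine Lemma \ref{egon} with the elementary topological fact that a locally constant continuous function on a connected space is necessarily constant. The entire substance of the corollary has already been packed into Lemma \ref{egon}; what remains is a short clopen-decomposition argument plus the trivial reverse inclusion.

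First I would invoke Lemma \ref{egon}: since $f$ is equicontinuous, every observable $\varphi\in Exp(f)$ is locally constant on $X$. Next I would upgrade local constancy to global constancy using connectedness. Fix $\varphi\in Exp(f)$, choose a point $x_0\in X$, and set $c=\varphi(x_0)$. Consider the level set $A=\varphi^{-1}(\{c\})$. Because $\varphi$ is locally constant, $A$ is open: for each $x\in A$ there is a neighborhood $U$ with $\varphi(U)=\{\varphi(x)\}=\{c\}$, so $U\subset A$. By the same token its complement $X\setminus A$ is open, being a union of neighborhoods on which $\varphi$ takes values different from $c$. Thus $A$ is a nonempty clopen subset of $X$, and since $X$ is connected we conclude $A=X$, i.e. $\varphi\equiv c$. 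This establishes $Exp(f)\subset Const(X)$.

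For the reverse inclusion I would observe that every constant observable is expansive with any choice of constant $\delta>0$: if $\varphi$ is constant then $\varphi(x)=\varphi(y)$ holds for all $x,y\in X$, so the implication \eqref{papa} is satisfied vacuously. Hence $Const(X)\subset Exp(f)$, and combining the two inclusions yields $Exp(f)=Const(X)$.

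I do not anticipate any genuine obstacle here, since the equicontinuity hypothesis has already been consumed by Lemma \ref{egon}; the only new ingredient is the standard fact that a continuous locally constant map out of a connected space is constant, which the clopen argument above supplies directly. The one point to state carefully is that the reverse inclusion is automatic, so that the corollary is an exact equality rather than a single inclusion.
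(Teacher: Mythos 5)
Your proof is correct and follows exactly the route the paper intends: the paper derives this corollary directly from Lemma \ref{egon} (leaving the "locally constant on a connected space implies constant" step and the trivial inclusion $Const(X)\subset Exp(f)$ implicit), and your clopen argument supplies precisely those details.
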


We shall prove that the set of expansive observables is a subalgebra of $C(X)$.

\begin{lem}
\label{lobo}
$Exp(f)$ is an F$_\sigma$ subalgebra of $C(X)$ containing $Const(X)$.
\end{lem}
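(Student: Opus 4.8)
The plan is to verify the three assertions—that $Exp(f)$ contains the constants, that it is closed under the algebra operations, and that it is F$_\sigma$—separately, the first two being essentially immediate from the definition and the third requiring a suitable countable decomposition.

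First I would record the containment $Const(X)\subset Exp(f)$: a constant observable satisfies $\varphi(x)=\varphi(y)$ for all $x,y\in X$, so \eqref{papa} holds vacuously with any $\delta>0$. For the subalgebra structure, the key observation is that expansivity constants are \emph{downward stable}: if $\delta$ is an expansivity constant for $\varphi$ and $0<\delta'\le\delta$, then $\delta'$ is one as well, since the hypothesis $d(f^i(x),f^i(y))\le\delta'$ forces $d(f^i(x),f^i(y))\le\delta$. Given $\varphi,\psi\in Exp(f)$ with constants $\delta_\varphi,\delta_\psi$, I would set $\delta=\min\{\delta_\varphi,\delta_\psi\}$; then any pair $x,y$ whose orbits remain $\delta$-close satisfies simultaneously $\varphi(x)=\varphi(y)$ and $\psi(x)=\psi(y)$, whence $(\varphi+\psi)(x)=(\varphi+\psi)(y)$ and $(\varphi\psi)(x)=(\varphi\psi)(y)$. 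Closure under scalar multiplication is even simpler, the same constant $\delta_\varphi$ serving for $\lambda\varphi$. This yields $\mathbb{C}\cdot Exp(f)\cup(Exp(f)+Exp(f))\cup(Exp(f)\cdot Exp(f))\subset Exp(f)$.

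For the F$_\sigma$ claim I would decompose $Exp(f)$ along the value of the expansivity constant. For each $n\in\mathbb{N}$ put
\[
E_n=\{\varphi\in C(X): 1/n \text{ is an expansivity constant for }\varphi\}.
\]
By the downward stability noted above, every expansive observable lies in some $E_n$ (choose $n$ with $1/n\le\delta$), so $Exp(f)=\bigcup_{n\in\mathbb{N}}E_n$. It remains to see that each $E_n$ is closed in the sup norm. Here I would write $E_n$ as an intersection of closed sets: letting
\[
A_n=\{(x,y)\in X\times X: d(f^i(x),f^i(y))\le 1/n\ \ \forall i\in\mathbb{Z}\},
\]
membership $\varphi\in E_n$ is exactly the requirement that $\varphi(x)=\varphi(y)$ for every $(x,y)\in A_n$. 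Since the evaluation functionals $\varphi\mapsto\varphi(x)$ are continuous on $(C(X),\|\cdot\|_\infty)$ (indeed norm-nonexpanding), each set $\{\varphi:\varphi(x)=\varphi(y)\}$ is the zero set of the continuous functional $\varphi\mapsto\varphi(x)-\varphi(y)$ and hence closed, so $E_n=\bigcap_{(x,y)\in A_n}\{\varphi:\varphi(x)=\varphi(y)\}$ is closed as an arbitrary intersection of closed sets.

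I do not anticipate a serious obstacle; the content lies entirely in choosing the right countable decomposition. The one point that must not be skipped is the downward-stability remark, since it is what legitimizes indexing the pieces by the specific constants $1/n$ and thereby guarantees both that the union exhausts $Exp(f)$ and that each piece $E_n$ is cut out by a single fixed-constant condition (which is what makes it an intersection of closed evaluation-difference sets). Everything else is the routine bookkeeping of taking minima of finitely many constants.
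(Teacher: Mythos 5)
Your proposal is correct and follows essentially the same route as the paper: the same minimum-of-constants argument for the algebra operations and the same decomposition $Exp(f)=\bigcup_n C_{1/n}(f)$ indexed by the constants $1/n$. The only (cosmetic) difference is that you prove closedness of each piece by exhibiting it as an intersection of zero sets of the continuous evaluation-difference functionals, whereas the paper runs the equivalent sequential argument; your explicit remark on downward stability of expansivity constants is a point the paper uses implicitly.
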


\begin{proof}
That $Exp(f)$ contains $Const(X)$ follows directly from the definition.
Now, take $\lambda\in\mathbb{C}$ and $\varphi\in Exp(f)$.
If $\delta$ is an expansivity constant of $\varphi$ w.r.t. $f$
and $d(f^n(x),f^n(y))\leq\epsilon$ for all $n\in\mathbb{Z}$ then
$\varphi(x)=\varphi(y)$ so $\lambda\varphi(x)=\lambda\varphi(x)$
thus $\delta$ is also an expansivity constant for $\lambda\varphi$.
Therefore, $\lambda\varphi\in Exp(f)$ showing
$$
\mathbb{C}\cdot Exp(f)\subset Exp(f).
$$
Next, take $\varphi_1,\varphi_2\in Exp(f)$ with expansivity constants
$\delta_1,\delta_2$ respectively.
If $\delta=\min(\delta_1,\delta_2)$ and
$d(f^n(x),f^n(y))\leq\delta$ for all $n\in\mathbb{Z}$ then
$d(f^n(x),f^n(y))\leq\delta_1$ for all $n\in\mathbb{Z}$ so
$\varphi_1(x)=\varphi_1(y)$.
Likewise, $\varphi_2(x)=\varphi_2(y)$ hence
$$
(\varphi_1+\varphi_2)(x)=\varphi_1(x)+\varphi_2(x)=\varphi_1(y)+\varphi_2(y)=(\varphi_1+\varphi_2)(y)
$$
and
$$
(\varphi_1\cdot\varphi_2)(x)=\varphi_1(x)\varphi_2(x)=\varphi_1(y)\varphi_2(y)=(\varphi_1\cdot\varphi_2)(y).
$$
This shows
$$
Exp(f)+Exp(f)\subset Exp(f)\quad\mbox{ and }\quad Exp(f)\cdot Exp(f)\subset Exp(f).
$$
Therefore, $Exp(f)$ is a subalgebra.

Finally, we prove that $Exp(f)$ is F$_\sigma$. Define
$$
C_\delta(f)=\{\varphi\in C(X):\delta\mbox{ is an expansivity constant of $\varphi$ w.r.t.}f\},\quad\quad\forall \delta>0.
$$
Clearly
$$
Exp(f)=\bigcup_{n\in\mathbb{N}}C_{\frac{1}n}(f)
$$
so we only need to prove that $C_\delta(f)$ is closed for all $\delta>0$.
Take a sequence $\varphi_i\in C_\delta(f)$ converging to some observable
$\varphi$ in $C(X)$. Then,
$$
\lim_{i\to\infty}\varphi_i(z)=\varphi(z),\quad\quad\forall z\in X.
$$
Take $x,y\in X$ such that
$d(f^n(x),f^n(y))\leq\delta$ for all $n\in\mathbb{Z}$.
Then, $\varphi_i(x)=\varphi_i(y)$ for all $i\in\mathbb{N}$ and so
$$
\varphi(x)=\lim_{i\to\infty}\varphi_i(x)=\lim_{i\to\infty}\varphi_i(y)=\varphi(y)
$$
thus $\delta$ is an expansivity constant of $\varphi$.
This proves $\varphi\in C_\delta(f)$ therefore $C_\delta(f)$ is closed as asserted.
\end{proof}

Next, we deal with topologically conjugated homeomorphisms.

\begin{lem}
\label{wrong}
Let $f:X\to X$ and $g:Y\to Y$ be topologically conjugated homeomorphisms of compact metric spaces. Then, the subalgebras
$Exp(f)$ and $Exp(g)$ are isomorphic by an isomorphism carrying constant functions into constant functions.
\end{lem}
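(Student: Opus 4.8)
The plan is to use the pullback by the conjugacy as the desired isomorphism. Let $h:Y\to X$ be a homeomorphism realizing the conjugacy, so that $f\circ h=h\circ g$, and define $h^*:C(X)\to C(Y)$ by $h^*(\varphi)=\varphi\circ h$. Since $h$ is a homeomorphism, $h^*$ is a bijective $*$-homomorphism with inverse $(h^{-1})^*$, hence an algebra isomorphism of $C(X)$ onto $C(Y)$; it manifestly sends a constant function on $X$ to the constant function on $Y$ with the same value. Thus it suffices to show that $h^*$ restricts to a bijection between the subalgebras, i.e. that $\varphi\in Exp(f)$ if and only if $\varphi\circ h\in Exp(g)$, for then the restriction of the isomorphism $h^*$ to $Exp(f)$ is the sought isomorphism onto $Exp(g)$ that carries constants to constants.

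First I would iterate the conjugacy. From $f\circ h=h\circ g$ one gets $f^i\circ h=h\circ g^i$ for every $i\in\mathbb{Z}$ (by induction for $i\geq0$, and by composing with inverses for $i<0$). The crux is then to transport an expansivity constant across $h$. The main obstacle is that $h$ need not be an isometry, so an expansivity constant for $\varphi$ does not directly serve as one for $\varphi\circ h$; this is precisely where compactness enters, since it guarantees that both $h$ and $h^{-1}$ are uniformly continuous, which is exactly what is needed to convert one constant into the other.

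Concretely, for the forward direction suppose $\varphi\in Exp(f)$ with expansivity constant $\delta$, and use uniform continuity of $h$ to choose $\eta>0$ such that $d_Y(a,b)\leq\eta$ implies $d_X(h(a),h(b))\leq\delta$, where $d_X,d_Y$ denote the metrics of $X,Y$. If $y,y'\in Y$ satisfy $d_Y(g^i(y),g^i(y'))\leq\eta$ for all $i\in\mathbb{Z}$, then setting $x=h(y)$, $x'=h(y')$ and using $f^i(x)=h(g^i(y))$ and $f^i(x')=h(g^i(y'))$ we obtain $d_X(f^i(x),f^i(x'))\leq\delta$ for all $i$, whence $\varphi(x)=\varphi(x')$, i.e. $(\varphi\circ h)(y)=(\varphi\circ h)(y')$. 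Thus $\eta$ is an expansivity constant for $\varphi\circ h$ with respect to $g$, so $\varphi\circ h\in Exp(g)$.

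The reverse direction is entirely symmetric, replacing $h$ by $h^{-1}$ (also uniformly continuous, by compactness of $X$) and arguing with the preimage observable. Combining the two inclusions yields $h^*(Exp(f))=Exp(g)$, and restricting $h^*$ to $Exp(f)$ produces the desired algebra isomorphism onto $Exp(g)$ preserving constant functions. The only subtlety worth isolating, as noted above, is the passage between expansivity constants, which rests on uniform continuity rather than on any isometric property of $h$.
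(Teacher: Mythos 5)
Your proposal is correct and follows essentially the same route as the paper: the pullback $\varphi\mapsto\varphi\circ h$ as the algebra isomorphism, with uniform continuity of $h$ (from compactness) used to convert an expansivity constant for $\varphi$ with respect to $f$ into one for $\varphi\circ h$ with respect to $g$, and the reverse inclusion obtained by symmetry via $h^{-1}$. No gaps.
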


\begin{proof}
Let $h:Y\to X$ be a homeomorphism such that
$f\circ h=h\circ g$.
Define $H:C(X)\to C(Y)$ by
$H(\varphi)=\varphi\circ h$.
Clearly $H$ is an isomorphism with inverse $H^{-1}:C(Y)\to C(X)$ given by $H^{-1}(\psi)=\psi\circ h^{-1}$.
Clearly $H(Const(X))=Const(Y)$.
Then, we only have to prove that $H(Exp(f))\subset Exp(g)$.

Take $\varphi\in Exp(f)$ and let $\epsilon$ be an expansivity constant.
Let $\delta$ be such that $d(a,b)\leq\delta$ implies
$d(h(a),h(b))\leq\epsilon$.
Suppose that $z,w\in Y$ and $d(g^i(z),g^i(w))\leq\delta$ for all $i\in\mathbb{Z}$.
Then, $d(h(g^i(z)),h(g^i(w)))\leq\epsilon$ and so
$d(f^i(h(z)),f^i(h(w)))\leq\epsilon$ for all $i\in\mathbb{Z}$
henceforth $\varphi(h(z))=\varphi(h(w))$ thus
$H(\varphi)(z)=H(\varphi)(w)$.
This proves that $\delta$ is an expansivity constant of $H(\varphi)$ w.r.t. $g$
hence $H(\varphi)\in Exp(g)$. Therefore, $H(Exp(f))\subset Exp(g)$ completing the proof.
\end{proof}

The {\em nonwandering set} of $f$ is defined by
$$
\Omega(f)=\{x\in X:\forall\mbox{ neighborhood }U \mbox{ of }x\, \exists n\in\mathbb{N}\mbox{ s.t. }f^n(U)\cap U\neq\emptyset\}.
$$

\begin{lem}
\label{ulrich}
If $X$ is locally connected and $f$ satisfies the following property:
\begin{enumerate}
\item[(P)]
$diam(f^n(O))\to0$ as $n\to\infty$ whenever $O\subset X$ is an open connected set such that the collection $\{f(O),f^2(O),\cdots f^n(O)\cdots\}$  is pairwise disjoint,
\end{enumerate}
then every expansive observable of $f$ is locally constant on $X\setminus \Omega(f)$.
\end{lem}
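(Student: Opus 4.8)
The plan is to fix an expansive observable $\varphi\in Exp(f)$ with expansivity constant $\delta>0$, fix a wandering point $x\in X\setminus\Omega(f)$, and produce a neighborhood $W$ of $x$ on which $\varphi\equiv\varphi(x)$; since $x$ is arbitrary this yields local constancy on $X\setminus\Omega(f)$. To do this I would verify, for every $y$ in a suitably small $W$, the hypothesis of expansivity, namely that $d(f^n(x),f^n(y))\le\delta$ for all $n\in\mathbb{Z}$, and then invoke the definition of $Exp(f)$ to conclude $\varphi(x)=\varphi(y)$.

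First I would build the neighborhood. Because $x\notin\Omega(f)$ there is a neighborhood $V$ of $x$ with $f^n(V)\cap V=\emptyset$ for all $n\ge 1$; using local connectedness of $X$ I would shrink $V$ to an open connected neighborhood $U\ni x$ with the same wandering property. A short computation (apply $f^{-i}$ to a hypothetical intersection $f^i(U)\cap f^j(U)$) shows that the entire orbit $\{f^n(U):n\in\mathbb{Z}\}$ is pairwise disjoint; in particular $\{f(U),f^2(U),\dots\}$ is pairwise disjoint, so property (P) applies and gives $\operatorname{diam}(f^n(U))\to 0$ as $n\to+\infty$. Choosing $N$ with $\operatorname{diam}(f^n(U))\le\delta$ for all $n\ge N$, every $y\in U$ already satisfies $d(f^n(x),f^n(y))\le\operatorname{diam}(f^n(U))\le\delta$ for all $n\ge N$, uniformly in $y$. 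The finitely many remaining forward times $0\le n<N$ are handled by continuity: since $f^0,\dots,f^{N-1}$ are continuous at $x$, there is a connected neighborhood $W\subseteq U$ such that $d(f^n(x),f^n(y))\le\delta$ for all $y\in W$ and all $0\le n<N$.

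It remains to control the backward times $n<0$, and this is the main obstacle, since property (P) only furnishes forward contraction of wandering connected sets. The full orbit of $U$ is pairwise disjoint, so $\{f^{-1}(U),f^{-2}(U),\dots\}$ is pairwise disjoint as well; the plan is to obtain the matching estimate $\operatorname{diam}(f^{-n}(U))\to 0$ as $n\to+\infty$ by applying the contraction hypothesis in negative time (the analogue of (P) for $f^{-1}$, which is available in the situations where this lemma is applied), thereby fixing $N'$ with $\operatorname{diam}(f^{-n}(U))\le\delta$ for $n\ge N'$ and shrinking $W$ once more so that the finitely many times $-N'<n\le -1$ are covered by continuity exactly as above. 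Absent such a backward contraction one would instead have to argue that the backward orbit of a point $y\in W$ stays uniformly $\delta$-close to that of $x$ by a compactness argument, using that the orbit of the wandering point $x$ is injective and that its negative limit set lies in $\Omega(f)$; making this estimate uniform over all negative times is the delicate point.

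Once $\delta$-closeness holds for every $n\in\mathbb{Z}$, expansivity of $\varphi$ yields $\varphi(y)=\varphi(x)$ for all $y\in W$, hence $\varphi(W)=\{\varphi(x)\}$ and a fortiori $\varphi\big(W\cap(X\setminus\Omega(f))\big)=\{\varphi(x)\}$. As $x\in X\setminus\Omega(f)$ was arbitrary, $\varphi$ is locally constant on $X\setminus\Omega(f)$, completing the proof. I expect the decomposition of $\mathbb{Z}$ into a contracting forward tail, a contracting backward tail, and a finite central block handled by continuity to be the organizing idea, with the backward tail being where the real work lies.
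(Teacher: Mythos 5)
Your proof follows essentially the same route as the paper's: choose a connected open wandering neighborhood of $x\in X\setminus\Omega(f)$, use (P) to force the diameters of the far iterates below the expansivity constant $\delta$, handle the finitely many remaining iterates by continuity of $f$, and conclude $\varphi(y)=\varphi(x)$ from the definition of expansive observable. The one place you diverge is precisely the point you flag as delicate: property (P), read literally, controls $diam(f^n(O))$ only as $n\to+\infty$, while expansivity of $\varphi$ requires $d(f^n(x),f^n(y))\leq\delta$ for \emph{all} $n\in\mathbb{Z}$. The paper's proof simply asserts that there is $N$ with $diam(f^n(O))<\delta$ whenever $|n|\geq N$, i.e.\ it tacitly reads (P) as a two-sided statement (equivalently, assumes the analogue of (P) for $f^{-1}$); it does not supply the backward estimate any more than you do. Your first proposed fix --- invoking the backward analogue of (P) --- is exactly what is needed, and it is harmless in the only application of the lemma (Theorem \ref{furia}: for a circle homeomorphism the full orbit of a wandering interval is pairwise disjoint and of finite total length, so the diameters tend to $0$ in both directions). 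Your fallback suggestion, deducing uniform backward $\delta$-closeness from the fact that the negative limit set of a wandering point lies in $\Omega(f)$, would not work in general: a backward orbit can accumulate on $\Omega(f)$ without the diameters of $f^{-n}(U)$ contracting. In short, your decomposition (forward tail by (P), backward tail by its backward analogue, finite central block by continuity) is the paper's argument, stated more honestly; with (P) required of both $f$ and $f^{-1}$ (or with $|n|\to\infty$ in (P)), your proof is complete.
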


\begin{proof}
Let $\varphi$ be an expansive observable with expansivity constant $\delta$.
If $x\in X\setminus\Omega(f)$ there is a connected open neighborhood $O$ of $x$ such that the sequence $(f^n(O))_{n\geq\mathbb{N}}$ is pairwise disjoint.
Then,
$diam(f^n(O))\to 0$ as $n\to\infty$ by (P) and so there is $N\in\mathbb{N}$ such that
$diam(f^n(O))<\delta$ whenever $n\in\mathbb{Z}$ with $|n|\geq N$.
Since $f$ is continuous, there exists also a neighborhood $U$ of $x$ in $O$ such that
$diam(f^n(U))\leq \delta$ for every $-N\leq n\leq N$.
Then, $diam(f^n(U))\leq\delta$ for all $n\in\mathbb{Z}$ and so
$\varphi$ is constant in $U$ since $\delta$ is an expansivity constant of
$\varphi$. This completes the proof.
\end{proof}

The {\em stable} and {\em unstable sets} of $f$ at $x\in X$ are defined by
$$
W^s(x)=\{y\in X:\lim_{n\to \infty}d(f^n(x),f^n(y))=0\}
$$
and
$$
W^u(x)=\{y\in X:\lim_{n\to- \infty}d(f^n(x),f^n(y))=0\}.
$$
Analogously, for every observable $\varphi$ we define
$$
W^s(x,\varphi)=\{y\in X:\lim_{n\to \infty}|\varphi(f^n(x))-\varphi(f^n(y))|=0\}
$$
and
$$
W^u(x,\varphi)=\{y\in X:\lim_{n\to -\infty}|\varphi(f^n(x))-\varphi(f^n(y))|=0\}.
$$
It follows that
$$
W^*(x)=\bigcap_{\varphi\in C(X)}W^*(x,\varphi),\quad\quad\forall x\in X\mbox{ and }*=s,u.
$$
If $\epsilon>0$ we define the dynamical balls
$$
W^s_\epsilon(x)=\{y\in X:d(f^n(x),f^n(y))\leq\epsilon,\, \forall n\geq0\}
$$
and
$$
W^u_\epsilon(x)=\{y\in X:d(f^n(x),f^n(y))\leq\epsilon,\, \forall n\leq0\}.
$$

\begin{lem}
\label{inteligencia}
For every $\varphi\in Exp(f)$ there is $\epsilon>0$ such that
$W^*_\epsilon(x)\subset W^*(x,\varphi)$ for all $x\in X$ and $*=s,u$.
\end{lem}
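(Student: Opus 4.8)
The plan is to take for $\epsilon$ a single expansivity constant $\delta$ of $\varphi$, which is legitimate precisely because $\delta$ does not depend on the point, so one and the same $\epsilon$ serves simultaneously for every $x\in X$. I would prove the stable case $*=s$ in full and then observe that the unstable case $*=u$ is entirely symmetric, obtained by replacing $n\to+\infty$ with $n\to-\infty$ throughout.

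For the stable case, fix $x\in X$ and $y\in W^s_\epsilon(x)$, so that $d(f^n(x),f^n(y))\le\epsilon$ for all $n\ge0$, and argue by contradiction: suppose $|\varphi(f^n(x))-\varphi(f^n(y))|$ does not tend to $0$. Then there exist $\beta>0$ and a sequence $n_k\to\infty$ with $|\varphi(f^{n_k}(x))-\varphi(f^{n_k}(y))|\ge\beta$ for every $k$. Since $X$ is compact, I would pass to a subsequence along which $f^{n_k}(x)\to p$ and $f^{n_k}(y)\to q$; the continuity of $\varphi$ then gives $|\varphi(p)-\varphi(q)|\ge\beta>0$, and in particular $\varphi(p)\neq\varphi(q)$.

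The decisive step is to promote the one-sided (forward) closeness of the orbits of $x$ and $y$ into a genuinely two-sided closeness of the orbits of the limit points $p$ and $q$. For a fixed index $i\in\mathbb{Z}$ and all sufficiently large $k$ one has $n_k+i\ge0$, whence $d(f^{n_k+i}(x),f^{n_k+i}(y))\le\epsilon$; applying the continuous map $f^i$ and letting $k\to\infty$ yields $d(f^i(p),f^i(q))\le\epsilon=\delta$. As $i$ was arbitrary, the full bi-infinite orbits of $p$ and $q$ stay $\delta$-close, so the expansivity of $\varphi$ forces $\varphi(p)=\varphi(q)$, contradicting the previous paragraph. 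This establishes $y\in W^s(x,\varphi)$, and hence $W^s_\epsilon(x)\subset W^s(x,\varphi)$.

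I expect the only subtle point to be exactly this passage from the forward half-orbit hypothesis to the two-sided orbit estimate at $(p,q)$: it is the index shift $n_k+i$ together with $n_k\to\infty$ that converts the condition ``$n\ge0$'' into ``all $n\in\mathbb{Z}$'' in the limit, which is what allows the \emph{two-sided} expansivity condition \eqref{papa} to be invoked. The unstable case runs identically with $n_k\to-\infty$, for which $n_k+i\le0$ eventually, again producing $d(f^i(p),f^i(q))\le\delta$ for every $i$ and the same contradiction.
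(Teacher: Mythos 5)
Your proposal is correct and follows essentially the same argument as the paper: take $\epsilon$ to be the expansivity constant, argue by contradiction via compactness to extract limit points $p,q$ with $\varphi(p)\neq\varphi(q)$, and use the index shift $n_k+i$ with $n_k\to\infty$ to upgrade the forward estimate to a two-sided one, forcing $\varphi(p)=\varphi(q)$. No discrepancies to report.
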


\begin{proof}
Let $\epsilon$ be the expansivity constant of $\varphi$.
If the conclusion of the lemma fails for $*=s$ and this $\epsilon$, then
there is $y\in W^s_\epsilon(x)\setminus W^s(x,\varphi)$.
It follows that
$d(f^n(x),f^n(y))\leq\epsilon$ for all $n\geq0$ but there are $\delta>0$
and a sequence $n_i\to\infty$ such that
$\delta\leq |\varphi(f^{n_i}(x))-\varphi(f^{n_i}(y))|$ for all $i\in\mathbb{N}$.
Since $X$ is compact, we can assume that
$f^{n_i}(x)\to \hat{x}$ and $f^{n_i}(y)\to \hat{y}$ as $i\to\infty$ for some
$\hat{x},\hat{y}\in X$.
Since $\varphi$ is continuous,
$|\varphi(\hat{x})-\varphi(\hat{y})|\geq\delta$ hence
$\varphi(\hat{x})\neq \varphi(\hat{y})$.
However,
for all $m\in\mathbb{Z}$ one has that
$m+n_i>0$ for $i$ large hence
$$
d(f^m(\hat{x}),f^m(\hat{y}))=\lim_{i\to\infty}d(f^{m+n_i}(x),f^{m+n_i}(y))\leq\epsilon
$$
hence $\varphi(\hat{x})=\varphi(\hat{y})$ a contradiction.
This completes the proof for $*=s$. The proof is analogous for $*=u$.
\end{proof}

\section{Proof of corollaries \ref{c}, \ref{c1} and theorems \ref{perra} and \ref{furia}}
\label{sec3}

\begin{proof}[Proof of Theorem \ref{perra}]
Item (1) to (3) follow from Lemma \ref{lobo}, Corollary \ref{llevarlo}
and Lemma \ref{wrong} respectively.
To prove Item (4) note that
\begin{equation}
\label{es}
Per(f)=\bigcup_{k\in\mathbb{N}}Fix(f^k).
\end{equation}
If $\varphi$ is an expansive observable of $f$, then it is also expansive for $f^k$ for all $k\in\mathbb{N}$ by Lemma \ref{mata}.
Then, $Fix(f^k)$ intersects finitely many level sets of $\varphi$ only by Lemma \ref{culito}. Therefore, $Per(f)$ intersects at most countable many
level sets of $\varphi$ by \eqref{es}. This completes the proof of Item (4).
To prove Item (5), we take $\varphi\in Exp(f)$ and $\epsilon$ as in Lemma \ref{inteligencia}.
By Proposition 2.4.1 in \cite{bs} there are distinct $x,y\in X$ such that
$y\in W^s_\epsilon(x)$. Then, $y\in W^s(x,\varphi)$ by Lemma \ref{inteligencia}
and so \eqref{robador} holds. This completes the proof.
\end{proof}

\begin{proof}[Proof of Corollary \ref{c}]
Let $f:X\to X$ be an expansive homeomorphism of a compact metric space.
If $Per(f)$ is uncountable, then $P=Fix(f^k)$ which is closed (by the continuity of $f$) is also uncountable for some $k\in\mathbb{N}$ by \eqref{es}.
On the other hand, $Exp(f)=C(X)$ by Item (2) of Theorem \ref{perra} so $P$ intersects at most countable many level sets of $\varphi$ for all $\varphi\in C(X)$ (by Item (4) of Theorem \ref{perra}). Since $P$ is closed, it is countable by Lemma \ref{passagem} a contradiction. Therefore, $Per(f)$ is countable.
\end{proof}

\begin{proof}[Proof of Corollary \ref{c1}]
Let $X$ be a continuum exhibiting a homeomorphism $f:X\to X$ which is both expansive and equicontinuous. Then, $C(X)=Exp(f)=Const(X)$ by Theorem \ref{perra} and so $X$ reduces to a single point.
\end{proof}

\begin{proof}[Proof of Theorem \ref{furia}]
First note that every homeomorphism $f:S^1\to S^1$ satisfies (P) in Lemma  \ref{ulrich}.

If $\Omega(f)=S^1$, $f$ is topologically conjugated to a rotation $R$.
Since $R$ is an isometry, it is equicotinuous so $C(R)=Const(S^1)$
by Item (2) of Theorem \ref{perra} thus $Exp(f)=Const(S^1)$ by Item (2) of Theorem \ref{perra}.
Therefore, $f$ cannot be pseudoexpansive.

If $\Omega(f)\neq S^1$, then $S^1\setminus \Omega(f)$ consists of an at most countably family of disjoint open intervals $J$.
Since $f$ satisfies (P), every $\varphi\in Exp(f)$ is constant in $J$ by Lemma \ref{ulrich} so $Exp(f)$ cannot be dense in $C(S^1)$.
Therefore, $f$ cannot be pseudoexpansive in this case too.
This completes the proof.
\end{proof}

\section*{Declaration of competing interest}

\noindent
There is no competing interest.

\section*{Data availability}

\noindent
No data was used for the research described in the article.

\section*{Acknowledgements}

\noindent
WJ was partially supported by the National Research Foundation (NRF) of the Republic of Korea (MSIT) (No. NRF-2021R1F1A1052631).


\begin{thebibliography}{000}


\bibitem{AA18}
Achigar, M., Artigue, A., Monteverde, I.,
Obverving expansive maps,
{\em Journal of London Mathematical Society}, 98 (2018), 501--516.

\bibitem{bs}
Brin, M., Stuck, G.,
{\em Introduction to Dynamical Systems},
Cambridge University Press (2002).











\bibitem{h}
Hewitt, E.,
Rings of real-valued functions,
{\em Trans. Amer. Math. Soc.} 64 (1948), 45--99.



\bibitem{k}
Kechris, A.S.,
{\em Classical Descriptive Set Theory},
Graduate Texts in Mathematics,156 (1995), Springer-Verlag.




\bibitem{n} 
Nerurkar, M.,
Observability and topological dynamics, 
{\em J. Dynam. Differential Equations} 3 (1991), 273--287.



\bibitem{r}
Reddy, W.,
Pointwise expansion homeomorphisms,
{\em J. London Math. Soc. (2)} 2 (1970), 232--236.



\bibitem{r02}
Roy, M.,
Fibrewise expansive systems,
{\em Topology Appl.} 124 (2002), 373--396.



\bibitem{st}
Silvestrov, S.D., Tomiyama, J.,
Topological dynamical systems of type I,
{\em Expo. Math.} 20 (2002), no. 2, 117--142.

\bibitem{t0}
Takens, F.,
Detecting strange attractors in turbulence,
{\em Dynamical Systems and Turbulence, Warwick 1980},
Lecture Notes in Math., 898 (1981), 366--381.


\bibitem{t1}
Tomiyama, J.,
{\em Invitation to C$^*$-algebras and topological dynamics},
World Scientific Advanced Series in Dynamical Systems, 3. World Scientific Publishing Co., Singapore, 1987.


\bibitem{t2}
Tomiyama, J.,
{\em The interplay between topological dynamics and theory of C$^*$-algebras},
Lecture Notes Series, 2. Seoul National University, Research Institute of Mathematics, Global Analysis Research Center, Seoul, 1992.





\bibitem{W78}
Walters, P.,
On the pseudo-orbit tracing property and its relationship to stability,
{\em The structure of attractors in dynamical systems} (Proc. Conf., North Dakota State Univ., Fargo, N.D., 1977), pp. 231--244,
Lecture Notes in Math., 668, Springer, Berlin-New York, 1978.











\end{thebibliography}
\end{document}